\definecolor{color0}{gray}{.50}
\definecolor{color1}{rgb}{0,.2,.8}
\definecolor{color2}{rgb}{1,.2,0}
\definecolor{color3}{rgb}{.8,.5,1}
\numberwithin{equation}{section}
\DeclareMathOperator*{\esssup}{ess\,sup}
\DeclareMathOperator*{\essinf}{ess\,inf}
\newtheorem{theorem}{Theorem}[section]
\newtheorem{proposition}[theorem]{Proposition}
\newtheorem{definition}[theorem]{Definition}
\newtheorem{remark}[theorem]{Remark}
\newcommand{\thref}{Theorem~\ref}
\newcommand{\E}{\mathbb{E}}
\newcommand{\PP}{\mathbb{P}}
\newcommand{\R}{\mathbb{R}}
\newcommand{\eps}{\epsilon}
\newcommand{\cT}{\mathcal{T}}
\newcommand{\T}{\mathbb{T}}
\newcommand{\kT}{\mathfrak{T}}
\newcommand{\cF}{\mathcal{F}}
\title[]{Non-zero-sum stopping games in discrete time}
\author[]{Zhou Zhou}
\address{Institute for Mathematics and its Applications, University of Minnesota}
\email{zhouzhou@umich.edu}
\date{August 23, 2015}
\keywords{Non-zero-sum, stopping games, Nash equilibrium}
\begin{document}
\maketitle
\begin{abstract}
We consider two-player non-zero-sum stopping games in discrete time. Unlike Dynkin games, in our games the payoff of each player is revealed after both players stop. Moreover, each player can adjust her own stopping strategy according to the other player's action. In the first part of the paper, we consider the game where players act simultaneously at each stage. We show that there exists a Nash equilibrium in mixed stopping strategies. In the second part, we assume that one player has to act first at each stage. In this case, we show the existence of a Nash equilibrium in pure stopping strategies.  
\end{abstract}

\section{Introduction}
As a classical model of stopping games, Dynkin game has attracted a lot of research. We refer to \cite{Dynkin,Zhang3,Solan,Hamadene,Kifer,Solan1,Solan2,Ko3,Lepeltier,Solan3,Solan4,Touzi,Neveu,Bismut,Ferenstein} and the references therein. In a Dynkin game, each player chooses a stopping strategy, and the payoffs are revealed when one player stops. In other words, the game ends at the minimum of the stopping strategies. In practice, it is more often that, even if a player has made the decision first, her payoff can still be affected by other players' decisions later on. Therefore, it is more reasonable to let the game end at the maximum of the stopping strategies. Moreover, a wise player would adjust her strategy after she observes other players' actions. Based on these two points, recently \cite{ZZ6,ZZ7,ZZ9} study the stopping games with these features. In particular, \cite{ZZ6,ZZ7} consider the zero-sum case, and \cite{ZZ7} investigates the non-zero-sum case in continuous time.

In this paper, given a filtered probability space $(\Omega,\cF,(\cF)_{t=0,\dotso,T},\PP)$, we consider a non-zero-sum stopping game in discrete time
$$u^i(\rho,\tau)=\E[U^i(\rho,\tau)],\quad i=1,2,$$
where the first (resp. second) player chooses $\rho$ (resp. $\tau$) to maximize the payoff $u^1$ (resp. $u^2$). Here $U^i(s,t)$ is $\cF_{s\vee t}$-measurable instead of $\cF_{s\wedge t}$-measurable as is assumed in Dynkin games. That is, the game ends at the maximum of $\rho$ and $\tau$. Moreover, here $\rho$ and $\tau$ are not (randomized) stopping times, they are strategies that can be adjusted according to each other.

In the first part of the paper, we consider the case when the two players act simultaneously at each stage (here both ``stop'' and ``not stop'' are actions). We show that there exists a perfect Nash equilibrium in mixed stopping strategies. The main idea to prove the result is to convert the original problem to a non-zero-sum Dynkin game with randomized stopping times.

In the second part of the paper, we consider the game where one player acts first at each stage. In this case, we show that there always exists a perfect Nash equilibrium in pure stopping strategies. We use the idea in \cite{ZZ9} to prove this result. That is, we first construct saddle points for some related zero-sum stopping games, and then using these saddle points we construct a Nash equilibrium for the non-zero-sum games. 

This paper extends the result in \cite{ZZ9} to the discrete time case. It has a broad range of applications, e.g., when companies choose times to enter the market, or when investors who both long and short American options choose times to exercise the options. 

The paper is organized as follows. In the next section, we consider the non-zero-sum stopping game when two players act simultaneously at each stage. In Section 3, we study the case when one player acts first at each stage. In Section 4, we make a comparison between our discrete-time results in this paper and the continuous-time result in \cite{ZZ9}.

\section{Stopping games where players act simultaneously at each stage}
In this section, we consider the non-zero-sum stopping game where players act simultaneously at each stage. We will consider mixed stopping strategies for the game. \thref{t1} is the main result of this section.

Let $(\Omega,\cF,(\cF_t)_{t=0,\dotso,T},\PP)$ be a filtered probability space, where $\Omega$ is countably generated, and $T\in\mathbb{N}$ is the finite time horizon. Let $\cT$ be the set of stopping times taking values in $\{0,\dotso,T\}$. For any $\sigma\in\cT$, let $\cT_\sigma:=\{\rho\in\cT,\ \rho\geq\tau\}$, and $\cT_{\sigma+}:=\{\rho\in\cT,\ \rho\geq(\tau+1)\wedge T\}$. Define
$$\T^a:=\{\phi:\{0,\dotso,T\}\times\Omega\mapsto\{0,\dotso,T\}:\ \phi(t,\cdot)\in\cT_{t+},\ t=0,\dotso,T\}.$$
Let $\cT^r$ be the set of randomized stopping times. That is, for any $\alpha\in\cT^r$, $\alpha:[0,1]\times\Omega\mapsto\{0,\dotso,T\}$ is $\mathcal{B}([0,1])\otimes\cF$-measurable, and $\alpha(p,\cdot)\in\cT$ for any $p\in[0,1]$.

\begin{definition}
$(\rho_0,\rho_1)$ is said to be a (pure) stopping strategy of type A, if $\rho_0\in\cT$ and $\rho_1\in\T^a$.
Denote $\kT^a$ as the set of (pure) stopping strategies of type A.
\end{definition}
For $(\rho_0,\rho_1)\in\T^a$, $\rho_0$ represents a player's initial (pure) stopping strategy, and $\rho_1(t,\cdot)$ represents the strategy adjusted by the player after she observes the other player's stopping at time $t$. For $\rho=(\rho_0,\rho_1),\tau=(\tau_0,\tau_1)\in\kT^a$, denote
$$\rho[\tau]:=\rho_0 1_{\{\rho_0\leq\tau_0\}}+\rho_1(\tau_0) 1_{\{\rho_0>\tau_0\}}.$$

\begin{definition}
$(\alpha,\rho_1)$ is said to be a mixed stopping strategy of type A, if $\alpha\in\cT^r$ and $\rho_1\in\T^a$. Denote $\kT^{ar}$ as the set of mixed stopping strategies of type A.
\end{definition}

\begin{remark}
One can also randomize the strategies in $\kT^i$. However, it turns out that we only need to randomize players' initial stopping times (i.e., the first component of $(\rho_0,\rho_1)\in\kT^a)$), in order to get the existence of a Nash equilibrium for the stopping game introduced below.
\end{remark}

For $i=1,2$, let $U^i:\ \{0,\dotso,T\}\times \{0,\dotso,T\}\times\Omega\mapsto\R$, such that $U^i(s,t,\cdot)$ is $\cF_{s\vee t}$-measurable. For simplicity, we assume that $U^i$ is bounded for $i=1,2$. Consider the non-zero-sum stopping game
\begin{equation}\label{e1}
u^i(\rho,\tau)=\int_{[0,1]^2}\Gamma^i\left(\rho(p,\cdot),\tau(q,\cdot)\right)dpdq,\quad\rho,\tau\in\kT^{ar},\quad i=1,2,
\end{equation}
where for $i=1,2$ and $\zeta=(\zeta_0,\zeta_1),\eta=(\eta_0,\eta_1)\in\kT^a$,
\begin{eqnarray}
\notag \Gamma^i(\zeta,\eta)&=&\E\left[U^i(\zeta[\eta],\eta[\zeta])\right]\\
\notag &=&\E\left[U^i(\zeta_0,\eta_1(\zeta_0)) 1_{\{\zeta_0<\eta_0\}}+U^i(\zeta_1(\eta_0),\eta_0) 1_{\{\zeta_0>\eta_0\}}+U^i(\zeta_0,\zeta_0) 1_{\zeta_0=\eta_0\}}\right].
\end{eqnarray}
Here the first player chooses $\rho$ to maximize $u^1$ and the second player chooses $\tau$ to maximize $u^2$. 

Recall the definition of a Nash equilibrium.
\begin{definition}
$(\rho^*,\tau^*)\in(\kT^{ar})^2$ is said to be a Nash equilibrium in $\kT^{ar}$ for the game \eqref{e1}, if for any $\rho,\tau\in\kT^{ar}$,
$$u^1(\rho,\tau^*)\leq u^1(\rho^*,\tau^*)\quad\text{and}\quad u^2(\rho^*,\tau)\leq u^2(\rho^*,\tau^*).$$
\end{definition}
Below is the main result of this section.
\begin{theorem}\label{t1}
There exists a Nash equilibrium in $\kT^{ar}$ for the game \eqref{e1}.
\begin{remark}
We cannot guarantee the existence of a Nash equilibrium for the game \eqref{e1} if players only use pure stopping strategies of type A (i.e., $\kT^a$). Consider the following deterministic one-period example. Let $T=1$ and $u^1(s,t)=-u^2(s,t)=1_{\{s=t\}}$ for $s,t=0,1$. Then it is easy to see that $\kT^a=\{0,1\}$. Obviously there is no Nash equilibrium for \eqref{e1} in $\kT^a$.
\end{remark}
\end{theorem}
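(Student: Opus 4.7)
The plan is to follow the hint in the introduction and reduce the game \eqref{e1} to a non-zero-sum Dynkin game whose strategies are randomized stopping times in $\cT^r$. First, I would compute, once and for all, optimal adjustment functions. Since a player's adjustment $\rho_1(t)$ matters only on the event where the opponent stops first at time $t$, player 1's pointwise best response solves the discrete-time optimal stopping problem
\[
V^1(t) := \esssup_{\sigma \in \cT_{t+}} \E[U^1(\sigma, t) \mid \cF_t],
\]
and a standard Snell-envelope construction delivers a selector $\rho_1^* \in \T^a$ attaining $V^1$. Symmetrically, pick $\tau_1^* \in \T^a$ attaining $V^2(t) := \esssup_{\sigma \in \cT_{t+}} \E[U^2(t, \sigma) \mid \cF_t]$. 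Introduce the bounded $\cF_t$-measurable data
\[
a^i(t) := \E[U^i(t, \tau_1^*(t)) \mid \cF_t], \qquad b^i(t) := \E[U^i(\rho_1^*(t), t) \mid \cF_t], \qquad c^i(t) := U^i(t, t),
\]
so in particular $a^2 = V^2$ and $b^1 = V^1$.

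Next I would establish the reduction. Since $\alpha(p), \beta(q) \in \cT$ are stopping times, the event $\{\alpha(p) < \beta(q)\}$ lies in $\cF_{\alpha(p)}$ and $\{\alpha(p) > \beta(q)\} \in \cF_{\beta(q)}$; a conditioning argument then yields, for every $\alpha, \beta \in \cT^r$,
\[
u^i\big((\alpha, \rho_1^*), (\beta, \tau_1^*)\big) = \int_{[0,1]^2} \E\big[a^i(\alpha(p)) 1_{\{\alpha(p) < \beta(q)\}} + b^i(\beta(q)) 1_{\{\alpha(p) > \beta(q)\}} + c^i(\alpha(p)) 1_{\{\alpha(p) = \beta(q)\}}\big] dp\, dq.
\]
This is precisely the payoff of a classical non-zero-sum Dynkin game (ending at $\alpha(p) \wedge \beta(q)$) in randomized stopping times with data $(a^i, b^i, c^i)_{i=1,2}$. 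In the finite-horizon discrete-time bounded setting, existence of a Nash equilibrium $(\alpha^*, \beta^*) \in (\cT^r)^2$ for such a game is classical and proceeds by backward induction on $T$: at each stage one invokes Nash's theorem for the static bimatrix game in which each player chooses to stop now or to continue.

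Finally I would set $\rho^* := (\alpha^*, \rho_1^*)$ and $\tau^* := (\beta^*, \tau_1^*)$ and verify the Nash property directly. For any deviation $(\alpha, \rho_1) \in \kT^{ar}$ by player 1 against $\tau^*$: on $\{\alpha(p) < \beta^*(q)\}$ the payoff depends only on $\tau_1^*$; on $\{\alpha(p) = \beta^*(q)\}$ it equals $c^1$; and on $\{\alpha(p) > \beta^*(q)\}$ it is $U^1(\rho_1(\beta^*(q)), \beta^*(q))$, whose conditional expectation given $\cF_{\beta^*(q)}$ is bounded above by $b^1(\beta^*(q)) = V^1(\beta^*(q))$, with equality when $\rho_1 = \rho_1^*$. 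Hence $u^1((\alpha, \rho_1), \tau^*) \leq u^1((\alpha, \rho_1^*), \tau^*) \leq u^1(\rho^*, \tau^*)$, the last inequality by the Nash property of $(\alpha^*, \beta^*)$ in the reduced Dynkin game; the symmetric argument handles player 2. The main obstacle is the reduction identity of the second step: one must carefully manage the conditional expectations and the measurability of the events $\{\alpha(p) < \beta(q)\}$ and $\{\alpha(p) > \beta(q)\}$ with respect to the appropriate $\sigma$-algebras. Once this is secured, both the existence for the reduced Dynkin game and the verification of equilibrium are routine.
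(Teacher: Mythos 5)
Your proposal is correct and follows essentially the same route as the paper: reduce to a non-zero-sum Dynkin game in randomized stopping times with data $X_t^i=\E_t[U^i(t,\tau_1^*(t))]$, $Y_t^i=\E_t[U^i(\rho_1^*(t),t)]$, $Z_t^i=U^i(t,t)$ built from the Snell-envelope optimizers, and then verify the Nash property by the same conditioning argument. The only cosmetic difference is that the paper obtains the equilibrium of the reduced Dynkin game by citing an existing theorem rather than sketching the backward-induction-plus-bimatrix-game argument.
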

\begin{proof}[Proof of \thref{t1}]
For $t=0,\dotso,T$, let
$$Y_t^1:=\esssup_{\sigma\in\cT_{t+}}\E_t[U^1(\sigma,t)]\quad\text{and}\quad X_t^2:=\esssup_{\sigma\in\cT_{t+}}\E_t[U^2(t,\sigma)],$$
where $\E_\theta[\cdot]:=\E[\cdot|\cF_\theta]$ for $\theta\in\cT$. For $t=0,\dotso,T$, let $\rho_1^*(t,\cdot)\in\cT_{t+}$ and $\tau_1^*(t,\cdot)\in\cT_{t+}$ be optimizers for $Y_t^1$ and $X_t^2$ respectively. That is,
$$\E_t[U^1(\rho_1^*(t),t)]=Y_t^1\quad\text{and}\quad\E_t[U^2(t,\tau_1^*(t))]=X_t^2,\quad\text{a.s.}.$$
Obviously $\rho_1^*(\cdot,\cdot),\tau_1^*(\cdot,\cdot)\in\T^a$. For $t=0,\dotso,T$, define
$$X_t^1:=\E_t[U^1(t,\tau_1^*(t))],\quad Y_t^2:=\E_t[U^2(\rho_1^*(t),t)],\quad\text{and}\quad Z_t^i=U^i(t,t),\ i=1,2.$$

Now consider the non-zero-sum Dynkin game with randomized stopping times
\begin{eqnarray}\label{e2}
\tilde u^i(\alpha,\beta)=\int_{[0,1]^2}\left(\E\left[X_\alpha^i 1_{\{\alpha<\beta\}}+Y_\beta^i 1_{\{\alpha>\beta\}}+Z_\alpha^i 1_{\{\alpha=\beta\}}\right]\right)dpdq,\quad i=1,2,
\end{eqnarray}
for $\alpha,\beta\in\cT^r$. By \cite[Theorem 2.1]{Ferenstein}, there exists a Nash equilibrium $(\alpha^*,\beta^*)\in(\cT^r)^2$ for the Dynkin game \eqref{e2}. That is, for any $\alpha,\beta\in\cT^r$,
\begin{equation}\label{e3}
\tilde u^1(\alpha,\beta^*)\leq\tilde u^1(\alpha^*,\beta^*)\quad\text{and}\quad\tilde u^2(\alpha^*,\beta)\leq\tilde u^2(\alpha^*,\beta^*).
\end{equation}

Let $\rho_m^*:=(\alpha^*,\rho_1^*)$ and $\tau_m^*:=(\beta^*,\tau_1^*)$. Now let us show that $(\rho_m^*,\tau_m^*)\in(\kT^{ar})^2$ is a Nash equilibrium for the game \eqref{e1}. Take $\rho=(\alpha,\rho_1)\in\kT^{ar}$. We have that
\begin{eqnarray}
\notag u^1(\rho,\tau_m^*)&=&\int_{[0,1]^2}\left(\E\left[U^1(\alpha,\tau_1^*(\alpha)) 1_{\{\alpha<\beta^*\}}+U^1(\rho_1(\beta^*),\beta^*) 1_{\{\alpha>\beta^*\}}+U^i(\alpha,\alpha) 1_{\{\alpha=\beta^*\}}\right]\right)dpdq\\
\notag &=&\int_{[0,1]^2}\left(\E\left[\E_\alpha[U^1(\alpha,\tau_1^*(\alpha))] 1_{\{\alpha<\beta^*\}}+\E_{\beta^*}[U^1(\rho_1(\beta^*),\beta^*)] 1_{\{\alpha>\beta^*\}}+U^i(\alpha,\alpha) 1_{\{\alpha=\beta^*\}}\right]\right)dpdq\\
\notag &\leq&\int_{[0,1]^2}\left(\E\left[X_\alpha^1 1_{\{\alpha<\beta^*\}}+Y_{\beta^*}^1 1_{\{\alpha>\beta^*\}}+Z_\alpha^1 1_{\{\alpha=\beta^*\}}\right]\right)dpdq\\
\notag &\leq&\int_{[0,1]^2}\left(\E\left[X_{\alpha^*}^1 1_{\{\alpha^*<\beta^*\}}+Y_{\beta^*}^1 1_{\{\alpha^*>\beta^*\}}+Z_{\alpha^*}^1 1_{\{\alpha^*=\beta^*\}}\right]\right)dpdq\\
\notag &=&\int_{[0,1]^2}\left(\E\left[U^1(\alpha^*,\tau_1^*(\alpha)) 1_{\{\alpha^*<\beta^*\}}+U^1(\rho_1^*(\beta^*),\beta^*) 1_{\{\alpha^*>\beta^*\}}+U^i(\alpha^*,\alpha^*) 1_{\{\alpha^*=\beta^*\}}\right]\right)dpdq\\
\notag &=& u^1(\rho_m^*,\tau_m^*),
\end{eqnarray}
where we use \eqref{e3} for the fourth (in)equality. Similarly, we can show that for any $\tau\in\kT^{ar}$,
$$u^2(\rho_m^*,\tau)\leq u^2(\rho_m^*,\tau_m^*).$$
This completes the proof of the result.
\end{proof}

\section{Stopping games where one player acts first at each stage}

In this section, we consider the stopping games in which one player acts first at each stage. We show that there always exists a Nash equilibrium in pure stopping strategies. \thref{t2} is the main result of this section.

Let
$$\T^b:=\{\psi:\{0,\dotso,T\}\times\Omega\mapsto\{0,\dotso,T\}:\ \psi(t,\cdot)\in\cT_t\}.$$
Here $\psi\in\T^b$ represents a player's (player 2) strategy adjusted at the time when the other player (player 1) stops. In other words,  player 1 acts first at each stage. (Compare $\T^b$ with $\T^a$.)
\begin{definition}
$(\tau_0,\tau_1)$ is said to be a (pure) stopping strategy of type B, if $\tau_0\in\cT$ and $\tau_1\in\T^b$. Denote $\kT^b$ as the set of (pure) stopping strategies of type B.
\end{definition}

For any $\rho=(\rho_0,\rho_1)\in\kT^a,\tau=(\tau_0,\tau_1)\in\kT^b$, 
$$\rho\langle\tau\rangle:=\rho_0 1_{\{\rho_0\leq\tau_0\}}+\rho_1(\tau_0) 1_{\{\rho_0>\tau_0\}}\quad\text{and}\quad\tau\langle\rho\rangle:=\tau_0 1_{\{\tau_0<\rho_0\}}+\tau_1(\rho_0) 1_{\{\tau_0\geq\rho_0\}}.$$

Consider the non-zero-sum stopping game
\begin{equation}\label{e6}
w^i(\rho,\tau):=\E\left[U^i(\rho\langle\tau\rangle,\tau\langle\rho\rangle)\right]=\E\left[U^i(\rho_0,\tau_1(\rho_0))1_{\{\rho_0\leq\tau_0\}}+U^i(\rho_1(\tau_0),\tau_0)1_{\{\rho_0>\tau_0\}}\right],
\end{equation}
for $\rho=(\rho_0,\rho_1)\in\kT^a$, $\tau=(\tau_0,\tau_1)\in\kT^b$ and $i=1,2$.
\begin{definition}
$(\rho^*,\tau^*)\in\kT^a\times\kT^b$ is said to be a Nash equilibrium for the game \eqref{e6}, if for any $\rho\in\kT^a$ and $\tau\in\kT^b$,
$$w^1(\rho,\tau^*)\leq w^1(\rho^*,\tau^*)\quad\text{and}\quad w^2(\rho^*,\tau)\leq w^2(\rho^*,\tau^*).$$
\end{definition}
Below is the main result of this section.
\begin{theorem}\label{t2}
There exists a Nash equilibrium for the game \eqref{e6}.
\end{theorem}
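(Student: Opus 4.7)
The approach is to exploit the stagewise leader-follower structure of \eqref{e6}: at each time $t$, player~1 commits first and player~2 reacts with full information. I will first pin down the optimal reaction strategies $\rho_1^*,\tau_1^*$ by two single-agent optimal-stopping problems, which collapses \eqref{e6} to a game over initial times $(\rho_0,\tau_0)\in\cT^2$ with tie-breaking favoring ``player~1 first''; then solve that reduced sequential stochastic game by backward induction, producing a subgame-perfect, hence Nash, pure equilibrium in $\kT^a\times\kT^b$.

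\emph{Fixing the reactions.} For each $t\in\{0,\dotso,T\}$, existence and $(t,\omega)$-measurability being standard in discrete time with bounded payoffs, choose
$$\tau_1^*(t,\cdot)\in\argmax_{\sigma\in\cT_t}\E_t[U^2(t,\sigma)],\qquad \rho_1^*(t,\cdot)\in\argmax_{\sigma\in\cT_{t+}}\E_t[U^1(\sigma,t)],$$
giving $\tau_1^*\in\T^b$, $\rho_1^*\in\T^a$. Set $A^1_t:=\E_t[U^1(t,\tau_1^*(t))]$, $X^2_t:=\E_t[U^2(t,\tau_1^*(t))]$, $Y^1_t:=\E_t[U^1(\rho_1^*(t),t)]$, $B^2_t:=\E_t[U^2(\rho_1^*(t),t)]$. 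In \eqref{e6}, $\rho_1$ affects $w^1$ only on $\{\rho_0>\tau_0\}$ through $\rho_1(\tau_0)$, and the defining property of $Y^1$ shows that replacing any $\rho_1$ by $\rho_1^*$ weakly increases $w^1$ whatever the other components are; symmetrically $\tau_1=\tau_1^*$ weakly dominates for player~2. So any equilibrium can be sought in the form $\rho^*=(\rho_0^*,\rho_1^*)$, $\tau^*=(\tau_0^*,\tau_1^*)$, and \eqref{e6} reduces to finding a pure equilibrium $(\rho_0^*,\tau_0^*)\in\cT^2$ for
$$\tilde w^1(\rho_0,\tau_0)=\E[A^1_{\rho_0}1_{\{\rho_0\leq\tau_0\}}+Y^1_{\tau_0}1_{\{\rho_0>\tau_0\}}],\quad \tilde w^2(\rho_0,\tau_0)=\E[X^2_{\rho_0}1_{\{\rho_0\leq\tau_0\}}+B^2_{\tau_0}1_{\{\rho_0>\tau_0\}}].$$

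\emph{Backward induction.} Model the reduced game as a sequential stochastic game: at each $t$, player~1 first chooses stop/continue, then player~2 (having observed) chooses the same. From $J^i_T:=U^i(T,T)$, inductively define $D^2_t:=\{B^2_t>\E_t[J^2_{t+1}]\}\in\cF_t$ (player~2's stopping region if player~1 continues), $C^1_t:=Y^1_t 1_{D^2_t}+\E_t[J^1_{t+1}] 1_{(D^2_t)^c}$, $D^1_t:=\{A^1_t>C^1_t\}\in\cF_t$, and
$$J^1_t:=A^1_t 1_{D^1_t}+C^1_t 1_{(D^1_t)^c},\quad J^2_t:=X^2_t 1_{D^1_t}+(B^2_t 1_{D^2_t}+\E_t[J^2_{t+1}] 1_{(D^2_t)^c}) 1_{(D^1_t)^c}.$$
Finally set $\rho_0^*:=\inf\{t:D^1_t\text{ holds}\}$ and $\tau_0^*:=\inf\{t:D^2_t\text{ holds}\}$ (with $\inf\emptyset:=T$), both $\cF_t$-stopping times by measurability of $D^i_t$.

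\emph{Verification and the main obstacle.} The dominance argument above reduces the Nash inequalities to $\tilde w^1(\rho_0,\tau_0^*)\leq\tilde w^1(\rho_0^*,\tau_0^*)=\E[J^1_0]$ for every $\rho_0\in\cT$ and its symmetric counterpart for player~2. Facing the fixed stopping time $\tau_0^*$, player~1 solves a pure single-agent optimal-stopping problem whose Snell envelope is precisely $J^1_t$, so $\rho_0^*$ is optimal by the one-shot deviation principle; analogously for player~2. The main obstacle is this very consistency: one must check that $J^i$ — constructed under the assumption that \emph{both} players follow the backward-induction rules from $t+1$ onwards — really equals each player's single-agent value against the opponent's committed path $\rho_0^*$ or $\tau_0^*$. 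This holds because once $\rho_1^*,\tau_1^*$ are fixed, future actions depend only on $(s,\omega)$ and not on the deviator's past choices, so a unilateral deviation in $\rho_0$ (resp.\ $\tau_0$) never alters the opponent's later behavior. The sequential (Stackelberg-at-each-stage) structure is also what removes the need for the randomization that \thref{t1} required in the simultaneous-move setting.
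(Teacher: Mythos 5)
Your proof is correct in substance, but it takes a genuinely different route from the paper's. The paper reduces \thref{t2} to auxiliary \emph{zero-sum} Dynkin games: it introduces the security values $v^1_t,v^2_t$ of \eqref{e11}--\eqref{e12}, defines $\mu^1,\mu^2$ by comparing the candidate equilibrium payoffs $H^1$ and $H^2\wedge F^2$ against these values, and---crucially---has each player switch to a \emph{punishment} continuation off the equilibrium path (e.g.\ $\tilde\rho^2$, the minimizer of the opponent's payoff, rather than her own best response $\tilde\rho^1$); deviations are then controlled via the saddle-point results of Section 3.1. You instead keep both players on their own best-response reactions $\rho_1^*,\tau_1^*$ everywhere, use the (valid) weak-dominance observation to collapse \eqref{e6} to a Dynkin-type game over initial stopping times with the ``player~1 first'' tie-break, and exploit that this reduced game has perfect information at each stage, so a pure subgame-perfect equilibrium exists by backward induction. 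Your reduction is legitimate: the chain $w^1((\rho_0,\rho_1),\tau^*)\leq w^1((\rho_0,\rho_1^*),\tau^*)=\tilde w^1(\rho_0,\tau_0^*)\leq\tilde w^1(\rho_0^*,\tau_0^*)$ (and its mirror for player~2) is exactly what is needed, and your consistency check is the right one---since $\rho_0^*,\tau_0^*$ depend only on $\omega$ and not on the deviator's choices, each Nash inequality is a single-agent optimal stopping problem whose Snell envelope agrees with $J^1$ on $\{t\leq\tau_0^*\}$ and with $J^2$ on $\{t\leq\rho_0^*\}$, which is all that matters even though $J^i$ is not a Snell envelope beyond the opponent's stopping time. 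What each approach buys: yours is more elementary and self-contained (no zero-sum theory, no threat strategies, and it makes transparent why sequential moves remove the randomization needed in \thref{t1}), while the paper's construction deliberately mirrors the continuous-time argument of \cite{ZZ9}, where backward induction is unavailable and the security-level construction is the natural one. The only items you should still write out are routine: the joint measurability of $(t,\omega)\mapsto\rho_1^*(t,\omega),\tau_1^*(t,\omega)$ (standard in finite discrete time, and equally taken for granted in the paper) and the terminal-time convention at $t=T$ in the recursion defining $J^i_t$ and $D^i_t$.
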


We will use the idea in \cite{ZZ9} to prove \thref{t2}. To be more specific, we will use the saddle points of some zero-sum stopping games to construct a Nash equilibrium for the non-zero-sum game \eqref{e6}. We will first provide some results in the zero-sum case in Section 3.1. Then we prove \thref{t2} in Section 3.2.

\subsection{Zero-sum case} We consider the stopping game in the zero-sum case, i.e., when $U^1=-U^2=U$. We will construct a saddle point for the zero-sum game. (The results in this section are essentially provided in \cite{ZZ6}. We present them for the completeness of this paper.)

For any $\sigma\in\cT$, consider the zero-sum stopping game
\begin{equation}\label{e8}
\underline v_\sigma:=\esssup_{\rho\in\kT_\sigma^a}\essinf_{\tau\in\kT_\sigma^b}\E_\sigma\left[U(\rho\langle\tau\rangle,\tau\langle\rho\rangle)\right],
\end{equation}
and 
\begin{equation}\label{e9}
\overline v_\sigma:=\essinf_{\tau\in\kT_\sigma^b}\esssup_{\rho\in\kT_\sigma^a}\E_\sigma\left[U(\rho\langle\tau\rangle,\tau\langle\rho\rangle)\right],
\end{equation}
where
$$\kT_\sigma^a:=\{(\rho_0,\rho_1)\in\kT^a:\ \rho_0\geq\sigma\}\quad\text{and}\quad\kT_\sigma^b:=\{(\rho_0,\rho_1)\in\kT^b:\ \rho_0\geq\sigma\}.$$

For $t=0,\dotso,T$, let
$$F_t:=\essinf_{\xi\in\cT_t}\E_t[U(t,\xi)]\quad\text{and}\quad G_t:=\left(\esssup_{\xi\in\cT_{t+}}\E_t[U(\xi,t)]\right)\vee F_t,$$
and $\tilde\rho(t,\cdot)\in\cT_{t+}$ and $\tilde\tau(t,\cdot)\in\cT$ be optimizers for $\esssup_{\xi\in\cT_{t+}}\E_t[U(\xi,t)]$ and $F_t$ respectively. Since $F\leq G$, we have that
\begin{equation}\label{e7}
v_\sigma:=\esssup_{\rho\in\cT_\sigma}\essinf_{\tau\in\cT_\sigma}\E_\sigma\left[F_\rho 1_{\{\rho\leq\tau\}}+G_\tau 1_{\{\rho>\tau\}}\right]=\essinf_{\tau\in\cT_\sigma}\esssup_{\rho\in\cT_\sigma}\E_\sigma\left[F_\rho 1_{\{\rho\leq\tau\}}+G_\tau 1_{\{\rho>\tau\}}\right],
\end{equation}
and $(\rho_\sigma,\tau_\sigma)$ is a saddle point for the Dynkin game \eqref{e7}, where
$$\rho_\sigma:=\inf\{t\geq\sigma:\ v_t=F_t\}\quad\text{and}\quad\tau_\sigma:=\inf\{t\geq\sigma:\ v_t=G_t\}.$$
That is, for any $\rho,\tau\in\cT_\sigma$,
$$\E_\sigma\left[F_\rho 1_{\{\rho\leq\tau_\sigma\}}+G_{\tau_\sigma} 1_{\{\rho>\tau_\sigma\}}\right]\leq\E_\sigma\left[F_{\rho_\sigma} 1_{\{\rho_\sigma\leq\tau_\sigma\}}+G_{\tau_\sigma} 1_{\{\rho_\sigma>\tau_\sigma\}}\right]\leq\E_\sigma\left[F_{\rho_\sigma} 1_{\{\rho_\sigma\leq\tau\}}+G_\tau 1_{\{\rho_\sigma>\tau\}}\right].$$
Let $\rho_\sigma^*:=(\rho_\sigma,\tilde\rho)\in\kT^a$ and $\tau_\sigma^*:=(\tau_\sigma,\tilde\tau)\in\kT^b$.
\begin{proposition}
We have $\overline v_\sigma=\underline v_\sigma=v_\sigma$. Moreover, $(\rho_\sigma^*,\tau_\sigma^*)$ is a saddle point of the game \eqref{e8} and \eqref{e9}.
\end{proposition}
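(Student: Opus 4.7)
The plan is to reduce the adjustable-strategy game to the Dynkin game \eqref{e7}: given any profile, the inner components $\tau_1$ and $\rho_1$ can be optimized pointwise against the fixed initial times, and the resulting reduced game should inherit the saddle $(\rho_\sigma, \tau_\sigma)$ of \eqref{e7}. Throughout, write $H_t := \esssup_{\xi \in \cT_{t+}} \E_t[U(\xi, t)]$, so that by construction $G_t = H_t \vee F_t$ and $\E_t[U(\tilde\rho(t), t)] = H_t$. The candidates $\rho_\sigma^* = (\rho_\sigma, \tilde\rho)$ and $\tau_\sigma^* = (\tau_\sigma, \tilde\tau)$ constructed just before the proposition combine the Dynkin saddle times with the pointwise optimizers that define $F$ and $H$.

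To prove $\underline v_\sigma \geq v_\sigma$, I would fix an arbitrary $\tau = (\tau_0, \tau_1) \in \kT_\sigma^b$ and expand
\[
\E_\sigma\bigl[U(\rho_\sigma^*\langle\tau\rangle, \tau\langle\rho_\sigma^*\rangle)\bigr] = \E_\sigma\bigl[U(\rho_\sigma, \tau_1(\rho_\sigma)) 1_{\{\rho_\sigma \leq \tau_0\}} + U(\tilde\rho(\tau_0), \tau_0) 1_{\{\rho_\sigma > \tau_0\}}\bigr].
\]
Since $\{\rho_\sigma \leq \tau_0\} \in \cF_{\rho_\sigma}$ and $\{\rho_\sigma > \tau_0\} \in \cF_{\tau_0}$, conditioning on the appropriate $\sigma$-algebra and using the definitions of $F$ and of the optimizer $\tilde\rho$ lower-bounds this by $\E_\sigma[F_{\rho_\sigma} 1_{\{\rho_\sigma \leq \tau_0\}} + H_{\tau_0} 1_{\{\rho_\sigma > \tau_0\}}]$.

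The step I expect to need the most care is matching this bound with the Dynkin game \eqref{e7}, which is stated in terms of $G$ rather than $H$, and the pointwise inequality $H_{\tau_0} \leq G_{\tau_0}$ goes the wrong way. I would resolve this by observing that on the event $\{\tau_0 < \rho_\sigma\}$ one actually has $G_{\tau_0} = H_{\tau_0}$. Indeed, for $t < \rho_\sigma$ the definition of $\rho_\sigma$ forces $v_t \neq F_t$; the standard Dynkin game sandwich $F_t \leq v_t \leq G_t$ then forces $F_t < G_t$, which via $G_t = H_t \vee F_t$ forces $G_t = H_t$. Substituting this equality and invoking the saddle inequality for $(\rho_\sigma, \tau_\sigma)$ against $\tau_0 \in \cT_\sigma$ then yields $\E_\sigma[U(\rho_\sigma^*\langle\tau\rangle, \tau\langle\rho_\sigma^*\rangle)] \geq v_\sigma$ for all $\tau \in \kT_\sigma^b$.

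The reverse inequality $\overline v_\sigma \leq v_\sigma$ should be easier: playing $\tau_\sigma^* = (\tau_\sigma, \tilde\tau)$ against arbitrary $\rho = (\rho_0, \rho_1) \in \kT_\sigma^a$, the optimality of $\tilde\tau$ gives $\E_{\rho_0}[U(\rho_0, \tilde\tau(\rho_0))] = F_{\rho_0}$, and $\E_{\tau_\sigma}[U(\rho_1(\tau_\sigma), \tau_\sigma)] \leq H_{\tau_\sigma} \leq G_{\tau_\sigma}$ needs no analogue of the delicate $G = H$ identity since the second inequality now runs in the favorable direction. Applying the other saddle inequality for $(\rho_\sigma, \tau_\sigma)$ bounds the result by $v_\sigma$. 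The trivial min-max inequality $\underline v_\sigma \leq \overline v_\sigma$ then pinches $\underline v_\sigma = \overline v_\sigma = v_\sigma$, and the same estimates confirm that $(\rho_\sigma^*, \tau_\sigma^*)$ is a saddle point of both \eqref{e8} and \eqref{e9}.
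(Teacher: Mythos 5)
Your proposal is correct and follows essentially the same route as the paper: condition on $\cF_{\rho_\sigma}$ and $\cF_{\tau_0}$ to reduce to the Dynkin payoffs $F$ and $G$, use the key observation that $G_t$ coincides with $\esssup_{\xi\in\cT_{t+}}\E_t[U(\xi,t)]$ on $\{t<\rho_\sigma\}$ (since there $G_t\geq v_t>F_t$), and then invoke the saddle inequalities for $(\rho_\sigma,\tau_\sigma)$ in \eqref{e7}. The only cosmetic difference is that you organize the argument as two one-sided bounds against $v_\sigma$, while the paper writes one chain per player ending at the equilibrium payoff.
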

\begin{proof}
Take $\tau=(\tau_0,\tau_1)\in\kT^b$. We have that
\begin{eqnarray}
\notag \E_\sigma\left[U(\rho_\sigma^*\langle\tau\rangle,\tau\langle\rho_\sigma^*\rangle)\right]&=&\E_\sigma\left[U(\rho_\sigma,\tau_1(\rho_\sigma))1_{\{\rho_\sigma\leq\tau_0\}}+U(\tilde\rho(\tau_0),\tau_0)1_{\{\rho_\sigma>\tau_0\}}\right]\\
\notag &=&\E_\sigma\left[\E_{\rho_\sigma}[U(\rho_\sigma,\tau_1(\rho_\sigma))]1_{\{\rho_\sigma\leq\tau_0\}}+\E_{\tau_0}[U(\tilde\rho(\tau_0),\tau_0)]1_{\{\rho_\sigma>\tau_0\}}\right]\\
\notag &\geq&\E_\sigma\left[F_{\rho_\sigma} 1_{\{\rho_\sigma\leq\tau_0\}}+G_{\tau_0} 1_{\{\rho_\sigma>\tau_0\}}\right]\\
\notag &\geq&\E_\sigma\left[F_{\rho_\sigma} 1_{\{\rho_\sigma\leq\tau_\sigma\}}+G_{\tau_\sigma} 1_{\{\rho_\sigma>\tau_\sigma\}}\right]\\
\notag &=&\E_\sigma\left[U(\rho_\sigma,\tilde\tau(\rho_\sigma))1_{\{\rho_\sigma\leq\tau_\sigma\}}+U(\tilde\rho(\tau_\sigma),\tau_\sigma)1_{\{\rho_\sigma>\tau_\sigma\}}\right]\\
\notag &=&\E_\sigma\left[U(\rho_\sigma^*\langle\tau_\sigma^*\rangle,\tau_\sigma^*\langle\rho_\sigma^*\rangle)\right],
\end{eqnarray}
where for the third and sixth (in)equalities we use the fact that, on $\{t<\rho_\sigma\}$, $G_t\geq v_t>F_t$, and thus $G_t=\esssup_{\xi\in\cT_{t+}}\E_t[U(\xi,t)]=\E_t[U(\tilde\rho(t),t)]$.

Similarly, we can show that for any $\rho\in\kT^a$,
$$\E_\sigma\left[U(\rho\langle\tau_\sigma^*\rangle,\tau_\sigma^*\langle\rho\rangle)\right]\leq\E_\sigma\left[U(\rho_\sigma^*\langle\tau_\sigma^*\rangle,\tau_\sigma^*\langle\rho_\sigma^*\rangle)\right].$$
This completes the proof of the result.
\end{proof}

\subsection{Proof of \thref{t2}} We will use the saddle points of some related zero-sum stopping games to construct a Nash equilibrium for the non-zero-sum game \eqref{e6}.

For $t=0,\dotso,T$, let
$$F_t^1:=\essinf_{\xi\in\cT_t}\E_t[U^1(t,\xi)]\quad\text{and}\quad G_t^1:=\left(\esssup_{\xi\in\cT_{t+}}\E_t[U^1(\xi,t)]\right)\vee F_t^1,$$
$$F_t^2:=\esssup_{\xi\in\cT_t}\E_t[U^2(t,\xi)]\quad\text{and}\quad G_t^2:=\left(\essinf_{\xi\in\cT_{t+}}\E_t[U^2(\xi,t)]\right)\wedge F_t^2,$$
and
\begin{equation}\label{e11}
v_t^1:=\essinf_{\tau\in\cT_t}\esssup_{\rho\in\cT_t}\E_t\left[F_t^1 1_{\{\rho\leq\tau\}}+G_t^1 1_{\{\rho>\tau\}}\right]=\esssup_{\rho\in\cT_t}\essinf_{\tau\in\cT_t}\E_t\left[F_t^1 1_{\{\rho\leq\tau\}}+G_t^1 1_{\{\rho>\tau\}}\right],
\end{equation}
\begin{equation}\label{e12}
v_t^2:=\esssup_{\tau\in\cT_t}\essinf_{\rho\in\cT_t}\E_t\left[F_t^2 1_{\{\rho\leq\tau\}}+G_t^2 1_{\{\rho>\tau\}}\right]=\essinf_{\rho\in\cT_t}\esssup_{\tau\in\cT_t}\E_t\left[F_t^2 1_{\{\rho\leq\tau\}}+G_t^2 1_{\{\rho>\tau\}}\right].
\end{equation}
Let $\tilde\tau^1(t), \tilde\rho^1(t),\tilde\tau^2(t),\tilde\rho^2(t)$ be optimizers for $F_t^1,\esssup_{\xi\in\cT_{t+}}\E_t[U^1(\xi,t)],F_t^2,\essinf_{\xi\in\cT_{t+}}\E_t[U^2(\xi,t)]$ respectively. Let
$$H_t^1:=\E_t[U^1(t,\tilde\tau^2(t)]\quad\text{and}\quad H_t^2:=\E_t[U^2(\tilde\rho^1(t),t)]$$
for $t=0,\dotso,T$, and
$$\mu^1:=\inf\{t\geq 0:\ v_t^1\leq H_t^1\}\quad\text{and}\quad\mu^2:=\inf\{t\geq 0:\ v_t^2\leq H_t^2\wedge F_t^2\},$$
and
\begin{equation}\label{e15}
\rho_{\mu^2}^2:=\inf\{t\geq\mu^2:\ v_t^2=F_t^2\}\quad\text{and}\quad\tau_{\mu^1}^1:=\inf\{t\geq\mu^1:\ v_t^1=G_t^1\}.
\end{equation}
Define
\[ \rho_0^* = \begin{cases} 
      \mu^1, & \text{if }\mu^1\leq\mu^2,\\
      \rho_{\mu^2}^2, & \text{if }\mu^1>\mu^2,\\
\end{cases}
\quad\quad
\rho_1^*(t) = \begin{cases} 
      \tilde\rho^2(t), & \text{if }t\geq\mu^2+1\text{ and }\mu^1>\mu^2,\\
      \tilde\rho^1(t), & \text{otherwise},\\
\end{cases} \]
\[ \tau_0^* = \begin{cases} 
      \tau_{\mu^1}^1, & \text{if }\mu^1\leq\mu^2,\\
      \mu^2, & \text{if }\mu^1>\mu^2,\\
\end{cases}
\quad\quad
\tau_1^*(t) = \begin{cases} 
      \tilde\tau^1(t), & \text{if }t\geq\mu^1+1\text{ and }\mu^1\leq\mu^2,\\
      \tilde\tau^2(t), & \text{otherwise},\\
\end{cases} \]
for $t=0,\dotso,T$, and
$$\rho^*:=(\rho_0^*,\rho_1^*)\quad\text{and}\quad\tau^*:=(\tau_0^*,\tau_1^*).$$
It can be shown that $\rho^*\in\kT^a$ and $\tau^*\in\kT^b$.

\begin{proposition}
$(\rho^*,\tau^*)$ is a Nash equilibrium for the game \eqref{e6}. Therefore, \thref{t2} holds.
\end{proposition}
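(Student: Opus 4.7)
My plan is to follow the blueprint of \cite{ZZ9}: verify the two Nash inequalities for $(\rho^*,\tau^*)$ separately by exploiting the piecewise definition on $A:=\{\mu^1\le\mu^2\}$ and $A^c:=\{\mu^1>\mu^2\}$, on each of which the candidate pair effectively reduces to a saddle point of one of the zero-sum Dynkin games \eqref{e11} or \eqref{e12}. I would describe the argument for $w^1(\rho,\tau^*)\le w^1(\rho^*,\tau^*)$ in detail; the inequality for player 2 follows by the symmetric argument, swapping the roles of \eqref{e11} and \eqref{e12} and of the triples $(H^1,F^1,G^1)$ and $(H^2,F^2,G^2)$. A preliminary step is to check that $\rho^*\in\kT^a$ and $\tau^*\in\kT^b$, which is routine since the switching event $A$ is $\cF_{\mu^1\wedge\mu^2}$-measurable.

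Given a deviation $\rho=(\rho_0,\rho_1)\in\kT^a$, I would split the sample space by $A$ vs $A^c$ and further case-split $\rho_0$ against $\mu^1$ and $\tau_{\mu^1}^1$ on $A$ (respectively against $\mu^2$ and $\rho_{\mu^2}^2$ on $A^c$). On each piece I would bound the conditional one-step payoff in terms of the relevant entry of $\{F^i,G^i,H^i\}$: in regions where player 2's response is $\tilde\tau^2$ the conditional payoff equals $H^1$, in regions where the response is $\tilde\tau^1$ it equals $F^1$ exactly, and when player 2 stops first the player-1 response is bounded above by $G^1$ at the corresponding time by the definition of $G^1$. On the sub-event $\{\rho_0\ge\mu^1\}\cap A$ these bounds combine into the Dynkin-game integrand for \eqref{e11} evaluated at $\rho_0$ against $\tau_{\mu^1}^1$, and the saddle-point inequality from Section 3.1 at $\mu^1$ pins its conditional expectation by $v^1_{\mu^1}\le H^1_{\mu^1}$. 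On $\{\rho_0<\mu^1\}\cap A$ the deviation payoff $H^1_{\rho_0}$ has no immediate bound by the Dynkin integrand, so I would use that $H^1_{\rho_0}\le v^1_{\rho_0}$ (strict, by the definition of $\mu^1$) together with the fact that $v^1$ is a submartingale on $\{t<\mu^1\}$: there $v^1_t>F^1_t$, and the dynamic-programming recursion for the Dynkin value places us either in the continuation region with $v^1_t=\E_t[v^1_{t+1}]$ or in player~2's stopping region with $v^1_t=G^1_t\le\E_t[v^1_{t+1}]$; optional sampling then transports the bound to $v^1_{\mu^1}\le H^1_{\mu^1}$. The argument on $A^c$ is strictly analogous, now with $\rho_0^*=\rho_{\mu^2}^2$, $\tau_0^*=\mu^2$, the zero-sum game \eqref{e12}, the symmetric supermartingale statement for $v^2$ on $\{t<\mu^2\}$, and the inequality $v^2_{\mu^2}\le H^2_{\mu^2}\wedge F^2_{\mu^2}$.

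The main obstacle is the heavy case-analysis bookkeeping: on each of $A$ and $A^c$ a deviation splits into several sub-events, the correct entry of $\{F^i,G^i,H^i\}$ must be identified in each, and the pieces must be reassembled so that the saddle-point inequality from Section 3.1 applies. The subtlest step is the ``early deviation'' sub-event $\{\rho_0<\mu^1\}\cap A$ (and its $A^c$ counterpart), where the deviation payoff is not bounded directly by a term of the Dynkin integrand; there one must invoke the (sub/super)martingale property of $v^i$ in the pre-$\mu^i$ region and optional sampling to carry the bound forward to $\mu^i$, at which point the defining inequality $v^i_{\mu^i}\le H^i_{\mu^i}$ (or $v^2_{\mu^2}\le H^2_{\mu^2}\wedge F^2_{\mu^2}$) finally closes the chain. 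Once these pieces are assembled, both Nash inequalities follow and $(\rho^*,\tau^*)$ is a Nash equilibrium.
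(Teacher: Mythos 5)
Your overall architecture matches the paper's: split according to how the deviation $\rho_0$ compares with $\mu^1\wedge\mu^2$, bound early deviations via $H^1_{\rho_0}\le v^1_{\rho_0}$ and the submartingale property of $v^1_{\cdot\wedge\mu^1}$ (valid because $F^1\le H^1$ forces $\mu^1\le\inf\{t:v^1_t=F^1_t\}$), and bound late deviations either by the saddle-point inequality of the zero-sum game at $\mu^1$ or by the definitional bound $\E_t[U^1(\rho_1(t),t)]\le G^1_t$. This is precisely the paper's Cases 1.1--1.4, just grouped by $A$ versus $A^c$ first.

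There is, however, a genuine confusion in your handling of $A^c=\{\mu^1>\mu^2\}$ \emph{inside the player-1 inequality}. You assert the argument there is ``strictly analogous'' but propose to run it with the game \eqref{e12}, the supermartingale property of $v^2$, and $v^2_{\mu^2}\le H^2_{\mu^2}\wedge F^2_{\mu^2}$. None of these objects can control player 1's payoff: $v^2,F^2,G^2,H^2$ are built from $U^2$ and carry no information about $U^1$. The symmetry in this proof is between the two Nash inequalities (player 1's deviation versus player 2's deviation), not between the events $A$ and $A^c$ within one inequality. On $A^c$ the player-1 bound still runs entirely through $U^1$-objects: if $\rho_0>\mu^2$, player 2 stops first and $\E_{\mu^2}[U^1(\rho_1(\mu^2),\mu^2)]\le G^1_{\mu^2}$, which is already the equilibrium payoff on that event (here one uses that $\mu^2<\mu^1$, so $G^1_{\mu^2}$ equals $\esssup_{\xi\in\cT_{\mu^2+}}\E_{\mu^2}[U^1(\xi,\mu^2)]$); if $\rho_0\le\mu^2$, then since $\mu^2<\mu^1$ on $A^c$ the same $H^1\le v^1$ bound and the $v^1_{\cdot\wedge\mu^1}$-submartingale, stopped at $\mu^2$, give $\E[v^1_{\mu^2}]\le\E[G^1_{\mu^2}]$. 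Relatedly, ``the inequality for player 2 follows by the symmetric argument'' is too quick: because player 1 moves first, the tie-breaking is asymmetric, and the paper needs an extra case split (its Cases 2.2 versus 2.3, distinguishing $\rho^2_{\mu^2}=\mu^2$ from $\rho^2_{\mu^2}>\mu^2$) that has no counterpart in Part 1. Once these two points are repaired, your outline coincides with the paper's proof.
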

\begin{proof}
\textbf{Part 1}: We will show that
\begin{equation}\label{e14}
w^1(\rho,\tau^*)\leq w^1(\rho^*,\tau^*)
\end{equation}
for any $\rho\in\kT^a$. As $F^1\leq H^1$,
\begin{equation}\label{e13}
\mu^1\leq\rho_0^1:=\inf\{t\geq 0:\ v_t^1=F_t^1\}.
\end{equation}
Hence, on $\{t<\mu^1\}$ we have that $G_t^1\geq v_t^1>F_t^1$, and thus $G_t^1=\esssup_{\xi\in\cT_{t+}}\E_t[U^1(\xi,t)]=\E_t[U^1(\tilde\rho^1(t),t)]$. Then
$$w^1(\rho^*,\tau^*)=\E\left[U^1(\mu^1,\tilde\tau^2(\mu^1))1_{\{\mu^1\leq\mu^2\}}+U^1(\tilde\rho^1(\mu^2),\mu^2)1_{\{\mu^1>\mu^2\}}\right]=\E\left[H_{\mu^1}^1 1_{\{\mu^1\leq\mu^2\}}+G_{\mu^2}^1 1_{\{\mu^1>\mu^2\}}\right].$$
Now take $\rho=(\rho_0,\rho_1)\in\kT^a$ and consider $w^1(\rho,\tau^*)$. We will consider four cases.

\textbf{Case 1.1}: $A_1:=\{\rho_0<\mu^1\wedge\mu^2\}$. Since $\mu^1\leq\rho_0^1$ by \eqref{e13}, the process $(v_{t\wedge\mu^1})_{t=0,\dotso,T}$ is a sub-martingale. Then
\begin{eqnarray}
\notag \E\left[U^1(\rho\langle\tau^*\rangle,\tau^*\langle\rho\rangle)1_{A_1}\right]&=&\E\left[U^1(\rho_0,\tilde\tau^2(\rho_0))1_{A_1}\right]\\
\notag &=&\E\left[H_{\rho_0}^11_{A_1}\right]\\
\notag &\leq&\E\left[v_{\rho_0}^11_{A_1}\right]\\
\notag &=&\E\left[v_{\rho_0\wedge\mu^1\wedge\mu^2}^11_{A_1}\right]\\
\notag &\leq&\E\left[\E_{\rho_0\wedge\mu^1\wedge\mu^2}\left[v_{\mu^1\wedge\mu^2}^1\right]1_{A_1}\right]\\
\notag &=&\E\left[\left(v_{\mu^1}^1 1_{\{\mu^1\leq\mu^2\}}+v_{\mu^2}^1 1_{\{\mu^1>\mu^2\}}\right)1_{A_1}\right]\\
\notag &\leq&\E\left[\left(H_{\mu^1}^1 1_{\{\mu^1\leq\mu^2\}}+G_{\mu^2}^1 1_{\{\mu^1>\mu^2\}}\right)1_{A_1}\right].
\end{eqnarray}

\textbf{Case 1.2}: $A_2:=\{\rho_0=\mu^1\wedge\mu^2\}$. We have that
\begin{eqnarray}
\notag \E\left[U^1(\rho\langle\tau^*\rangle,\tau^*\langle\rho\rangle)1_{A_2}\right]&=&\E\left[U^1(\rho_0,\tilde\tau^2(\rho_0))1_{A_2}\right]\\
\notag &=&\E\left[H_{\rho_0}^1 1_{A_2}\right]\\
\notag &=&\E\left[\left(H_{\mu^1}^1 1_{\{\mu^1\leq\mu^2\}}+H_{\mu^2}^1 1_{\{\mu^1>\mu^2\}}\right) 1_{A_2}\right]\\
\notag &\leq&\E\left[\left(H_{\mu^1}^1 1_{\{\mu^1\leq\mu^2\}}+v_{\mu^2}^1 1_{\{\mu^1>\mu^2\}}\right) 1_{A_2}\right]\\
\notag &\leq&\E\left[\left(H_{\mu^1}^1 1_{\{\mu^1\leq\mu^2\}}+G_{\mu^2}^1 1_{\{\mu^1>\mu^2\}}\right) 1_{A_2}\right].
\end{eqnarray}

\textbf{Case 1.3}: $A_3:=\{\rho_0>\mu^1\wedge\mu^2\}\cap\{\mu^1\leq\mu^2\}$. Let $\hat\tau^*:=(\tau_{\mu^1}^1,\tilde\tau^1)\in\kT^b$. We have that
\begin{eqnarray}
\notag \E\left[U^1(\rho\langle\tau^*\rangle,\tau^*\langle\rho\rangle)1_{A_3}\right]&=&\E\left[U^1(\rho\langle\hat\tau^*\rangle,\hat\tau^*\langle\rho\rangle)1_{A_3}\right]\\
\notag &=&\E\left[\E_{\mu^1}\left[U^1(\rho\langle\hat\tau^*\rangle,\hat\tau^*\langle\rho\rangle)\right]1_{A_3}\right]\\
\notag &\leq&\E\left[v_{\mu^1}^1 1_{A_3}\right]\\
\notag &\leq&\E\left[H_{\mu^1}^1 1_{A_3}\right]\\
\notag &=&\E\left[\left(H_{\mu^1}^1 1_{\{\mu^1\leq\mu^2\}}+G_{\mu^2}^1 1_{\{\mu^1>\mu^2\}}\right) 1_{A_3}\right].
\end{eqnarray}

\textbf{Case 1.4}: $A_4:=\{\rho_0>\mu^1\wedge\mu^2\}\cap\{\mu^1>\mu^2\}$.
\begin{eqnarray}
\notag \E\left[U^1(\rho\langle\tau^*\rangle,\tau^*\langle\rho\rangle)1_{A_4}\right]&=&\E\left[U^1(\rho_1(\mu^2),\mu^2)1_{A_4}\right]\\
\notag &\leq&\E\left[G_{\mu^2}^1 1_{A_4}\right]\\
\notag &=&\E\left[\left(H_{\mu^1}^1 1_{\{\mu^1\leq\mu^2\}}+G_{\mu^2}^1 1_{\{\mu^1>\mu^2\}}\right) 1_{A_4}\right].
\end{eqnarray}

By cases 1.1-1.4, we have \eqref{e14} holds.

\textbf{Part 2}: We will show that
\begin{equation}\label{e14}
w^2(\rho^*,\tau)\leq w^2(\rho^*,\tau^*)
\end{equation}
for any $\tau\in\kT^b$. We have that
$$w^2(\rho^*,\tau^*)=\E\left[U^2(\mu^1,\tilde\tau^2(\mu^1))1_{\{\mu^1\leq\mu^2\}}+U^1(\tilde\rho^1(\mu^2),\mu^2)1_{\{\mu^1>\mu^2\}}\right]=\E\left[F_{\mu^1}^2 1_{\{\mu^1\leq\mu^2\}}+H_{\mu^2}^2 1_{\{\mu^1>\mu^2\}}\right].$$
Take $\tau=(\tau_0,\tau_1)\in\kT^b$ and consider $w^2(\rho^*,\tau)$. We will consider five cases.

\textbf{Case 2.1}: $B_1:=\{\tau_0<\mu^1\wedge\mu^2\}$. On $\{t<\mu^2\}$, $F_t^2\geq v_t^2>H_t^2\wedge F_t^2$, and thus $v_t^2>H_t^2\wedge F_t^2=H_t^2$. Moreover, since $H^2\wedge F^2\geq G^2$,
$$\mu^2\leq\inf\{t\geq 0:\ v_t^2= G_t^2\}.$$
Hence, the process $(v_{t\wedge\mu^2}^2)_{t=0,\dotso,T}$ is a sub-martingale. Then following the argument in the case 1.1, we can show that
$$\E\left[U^2(\rho^*\langle\tau\rangle,\tau\langle\rho^*\rangle)1_{B_1}\right]\leq\E\left[\left(F_{\mu^1}^2 1_{\{\mu^1\leq\mu^2\}}+H_{\mu^2}^2 1_{\{\mu^1>\mu^2\}}\right)1_{B_1}\right].$$

\textbf{Case 2.2}: $B_2:=\{\tau_0=\mu^1\wedge\mu^2\}\cap\{\mu^1>\mu^2\}\cap\{\rho_{\mu^2}^2=\mu^2\}$. We have that
\begin{eqnarray}
\notag \E\left[U^2(\rho^*\langle\tau\rangle,\tau\langle\rho^*\rangle)1_{B_2}\right]&=&\E\left[U^2(\mu^2,\tau_1(\mu^2))1_{B_2}\right]\\
\notag &\leq&\E\left[F_{\mu^2}^2 1_{B_2}\right]\\
\notag &=&\E\left[v_{\mu^2}^2 1_{B_2}\right]\\
\notag &\leq&\E\left[H_{\mu^2}^2 1_{B_2}\right]\\
\notag &=&\E\left[\left(F_{\mu^1}^2 1_{\{\mu^1\leq\mu^2\}}+H_{\mu^2}^2 1_{\{\mu^1>\mu^2\}}\right)1_{B_2}\right],
\end{eqnarray}
where the third (in)equality follows from the definition of $\rho_{\mu^2}^2$ in \eqref{e15}.

\textbf{Case 2.3}: $B_3:=\{\tau_0=\mu^1\wedge\mu^2\}\setminus(\{\mu^1>\mu^2\}\cap\{\rho_{\mu^2}^2=\mu^2\})$. We have that
\begin{eqnarray}
\notag \E\left[U^2(\rho^*\langle\tau\rangle,\tau\langle\rho^*\rangle)1_{B_3}\right]&=&\E\left[\left(U^2(\mu^1,\tau_1(\mu^1))1_{\{\mu^1\leq\mu^2\}}+U^2(\tilde\rho^1(\mu^2),\mu^2)1_{\{\mu^1>\mu^2\}}\right)1_{B_2}\right]\\
\notag &\leq&\E\left[\left(F_{\mu^1}^2 1_{\{\mu^1\leq\mu^2\}}+H_{\mu^2}^2 1_{\{\mu^1>\mu^2\}}\right)1_{B_3}\right].
\end{eqnarray}

\textbf{Case 2.4}: $B_4:=\{\tau_0>\mu^1\wedge\mu^2\}\cap\{\mu^1\leq\mu^2\}$. Following the argument in case 1.4, we can show that
$$\E\left[U^2(\rho^*\langle\tau\rangle,\tau\langle\rho^*\rangle)1_{B_4}\right]\leq\E\left[\left(F_{\mu^1}^2 1_{\{\mu^1\leq\mu^2\}}+H_{\mu^2}^2 1_{\{\mu^1>\mu^2\}}\right)1_{B_4}\right].$$

\textbf{Case 2.5}: $B_5:=\{\tau_0>\mu^1\wedge\mu^2\}\cap\{\mu^1>\mu^2\}$. Following the argument in case 1.3, we can show that
$$\E\left[U^2(\rho^*\langle\tau\rangle,\tau\langle\rho^*\rangle)1_{B_5}\right]\leq\E\left[\left(F_{\mu^1}^2 1_{\{\mu^1\leq\mu^2\}}+H_{\mu^2}^2 1_{\{\mu^1>\mu^2\}}\right)1_{B_5}\right].$$
From cases 2.1-2.5, we have \eqref{e14} holds.
\end{proof}

\section{Comparison with the result in \cite{ZZ9}}
In this paper, regarding the existence of a Nash equilibrium in pure strategies, it leads to different results whether players act simultaneously or not at each stage. 

It can be expected that, in continuous time, as long as we have enough regularity for related processes, we would have the existence of an ($\eps$-) Nash equilibrium for the stopping game where one player acts first at each time. Unlike the case in discrete time, if we impose some (right) continuity assumption of $U^i$ in $(s,t)$, then it would not make too much difference whether players act simultaneously or not. Indeed, in \cite{ZZ9}, by assuming the continuity of $U^i$ in $(s,t)$, we show the existence of an $\eps$-Nash equilibrium in pure strategies for any $\eps>0$ for the stopping game in continuous time, where players act simultaneously at each time.

\bibliographystyle{siam}
\bibliography{ref}

\begin{thebibliography}{10}

\bibitem{ZZ7}
{\sc E.~{Bayraktar} and Z.~{Zhou}}, {\em {On a Stopping Game in continuous
  time}}, ArXiv e-prints,  (2014).
\newblock To appear in the Proceedings of the American Mathematical Society.

\bibitem{ZZ6}
{\sc E.~Bayraktar and Z.~Zhou}, {\em On zero-sum optimal stopping games},
  (2014).
\newblock Preprint, arXiv:1408.3692.

\bibitem{Bismut}
{\sc J.-M. Bismut}, {\em Sur un probl\`eme de {D}ynkin}, Z.
  Wahrscheinlichkeitstheorie und Verw. Gebiete, 39 (1977), pp.~31--53.

\bibitem{Dynkin}
{\sc E.~B. Dynkin}, {\em A game-theoretic version of an optimal stopping
  problem}, Dokl. Akad. Nauk SSSR, 185 (1969), pp.~16--19.

\bibitem{Ferenstein}
{\sc E.~Z. Ferenstein}, {\em On randomized stopping games}, in Advances in
  dynamic games, vol.~7 of Ann. Internat. Soc. Dynam. Games, Birkh\"auser
  Boston, Boston, MA, 2005, pp.~223--233.

\bibitem{Hamadene}
{\sc S.~Hamad{\`e}ne}, {\em Mixed zero-sum stochastic differential game and
  {A}merican game options}, SIAM J. Control Optim., 45 (2006), pp.~496--518.

\bibitem{Zhang3}
{\sc S.~Hamad{\`e}ne and J.~Zhang}, {\em The continuous time nonzero-sum
  {D}ynkin game problem and application in game options}, SIAM J. Control
  Optim., 48 (2009/10), pp.~3659--3669.

\bibitem{Kifer}
{\sc Y.~Kifer}, {\em Game options}, Finance Stoch., 4 (2000), pp.~443--463.

\bibitem{Ko3}
{\sc M.~Kobylanski, M.-C. Quenez, and M.~R. de~Campagnolle}, {\em Dynkin games
  in a general framework}, Stochastics, 86 (2014), pp.~304--329.

\bibitem{Solan1}
{\sc R.~Laraki and E.~Solan}, {\em The value of zero-sum stopping games in
  continuous time}, SIAM J. Control Optim., 43 (2005), pp.~1913--1922
  (electronic).

\bibitem{Solan}
\leavevmode\vrule height 2pt depth -1.6pt width 23pt, {\em Equilibrium in
  two-player non-zero-sum {D}ynkin games in continuous time}, Stochastics, 85
  (2013), pp.~997--1014.

\bibitem{Solan4}
{\sc R.~{Laraki}, E.~{Solan}, and N.~{Vieille}}, {\em {Continuous-time games of
  timing.}}, {J. Econ. Theory}, 120 (2005), pp.~206--238.

\bibitem{Lepeltier}
{\sc J.~{Lepeltier} and M.~{Maingueneau}}, {\em {Le jeu de Dynkin en theorie
  generale sans l'hypoth\`ese de Mokobodski.}}, {Stochastics}, 13 (1984),
  pp.~25--44.

\bibitem{Neveu}
{\sc J.~Neveu}, {\em Discrete-parameter martingales}, North-Holland Publishing
  Co., Amsterdam-Oxford; American Elsevier Publishing Co., Inc., New York,
  revised~ed., 1975.
\newblock Translated from the French by T. P. Speed, North-Holland Mathematical
  Library, Vol. 10.

\bibitem{Solan3}
{\sc D.~{Rosenberg}, E.~{Solan}, and N.~{Vieille}}, {\em {Stopping games with
  randomized strategies.}}, {Probab. Theory Relat. Fields}, 119 (2001),
  pp.~433--451.

\bibitem{Solan2}
{\sc E.~Shmaya and E.~Solan}, {\em Two-player nonzero-sum stopping games in
  discrete time}, Ann. Probab., 32 (2004), pp.~2733--2764.

\bibitem{Touzi}
{\sc N.~{Touzi} and N.~{Vieille}}, {\em {Continuous-time Dynkin games with
  mixed strategies.}}, {SIAM J. Control Optim.}, 41 (2002), pp.~1073--1088.

\bibitem{ZZ9}
{\sc Z.~{Zhou}}, {\em {Non-zero-sum stopping games in continuous time}}, ArXiv
  e-prints,  (2015).

\end{thebibliography}

\end{document}